\newcommand{\closure}[2][3]{%
  {}\mkern#1mu\overline{\mkern-#1mu#2}}
\newtheorem{theorem}{Theorem}[subsection]
\newtheorem{lemma}[theorem]{Lemma}
\theoremstyle{definition}
\newtheorem{remark}[theorem]{Remark}
\numberwithin{equation}{subsection}
\author{Guillaume Poliquin}
\address{Département de mathématiques et de statistique \\
Université de Montréal, CP 6128 succ. Centre-Ville, Montréal\\
H3C 3J7, Canada.}
\email{gpoliquin@dms.umontreal.ca}
\thanks{Research supported by a NSERC scholarship}
\keywords{ Inradius, $p$-Laplacian, principal frequency, $p$-capacity, interior capacity radius. }
\begin{document}
\noindent \let\thefootnote\relax\footnotetext{\emph{2010 Mathematics subject classification. Primary  35P30; secondary: 58J50, 35P15.}}

\title[Bounds for the $p$-Laplacian]{Principal frequency of the $p$-Laplacian and the inradius of Euclidean domains}

\begin{abstract}

We study the lower bounds for the principal frequency of the $p$-Laplacian on $N$-dimensional Euclidean domains. For $p>N$, we obtain a lower bound for the first eigenvalue of the $p$-Laplacian in terms of its inradius, without any assumptions on the topology of the domain. Moreover, we show that a similar lower bound can be obtained if $p > N-1$ assuming the boundary is connected. This result can be viewed as a generalization of the classical bounds for the first eigenvalue of the Laplace operator on simply connected planar domains.

\end{abstract}
\maketitle

\section{Introduction and main results}

\subsection{Physical models involving the $p$-Laplacian}

Let $\Omega$ be an $N$-dimensional Euclidean bounded domain. The $p$-Laplacian, where $1<p< \infty, \  p\neq 2$, is a nonlinear operator defined as $$\Delta_p f =  \operatorname{div}(|\nabla f|^{p-2} \nabla f),$$ for suitable $f$. Notice that the case $p=2$ corresponds to the well known Laplace operator. The $p$-Laplacian is used to model different physical phenomena, see for instance \cite{DTh, GR, Poli}.

The Laplacian can be used to describe the vibration of a homogeneous elastic membrane, such as a vibrating drum. The $p$-Laplace operator can also be used to model vibrating membrane, but composed of a nonelastic membrane under the load $f$,
\begin{eqnarray}
-\Delta_p (u) = f \qquad\mbox{ in } \Omega, \\
u=0 \quad\qquad \mbox{ on } \partial\Omega. \nonumber
\end{eqnarray}
The solution $u$ stands for the deformation of the membrane from the equilibrium position (see \cite{CEP, Si}). In that case, its deviation energy is given by $\int_\Omega |\nabla u|^p dx$. Therefore, a minimizer of the Rayleigh quotient,
$$ \frac{\int_\Omega |\nabla u|^p dx}{\int_\Omega |u|^p dx},$$
on $W_0^{1,p}(\Omega)$ satisfies $-\Delta_p (u) =  \lambda_{1,p}|u|^{p-2}u  \mbox{ in } \Omega.$ Here $\lambda_{1,p}$ is usually referred as the principal frequency of the vibrating nonelastic membrane.

\subsection{The eigenvalue problem for the $p$-Laplacian}

For $1 < p < \infty$, we study the following eigenvalue problem:
\begin{equation}\label{pLaplacian}
  \Delta_p u + \lambda |u|^{p-2}u=0 \mbox{ in } \Omega,
\end{equation}
where we impose the Dirichlet boundary condition and consider $\lambda$ to be the real spectral parameter. We say that $\lambda$ is an eigenvalue of $-\Delta_p$ if \eqref{pLaplacian} has a nontrivial weak solution $u_\lambda \in W^{1,p}_0(\Omega)$. That is, for any $v \in  C^\infty_0(\Omega)$,
\begin{equation}
\int_\Omega |\nabla u_\lambda|^{p-2} \nabla u_\lambda \cdot \nabla v - \lambda \int_\Omega |u_\lambda|^{p-2} u_\lambda v=0.
\end{equation}
The function $u_\lambda$ is then called an eigenfunction of $-\Delta_p$ associated to the eigenvalue $\lambda$. The case $N=1$ is better understood since explicit solutions in terms of beta functions are known (see \cite{D,E}).

If $N\geq2$, it is known that the first eigenvalue of the Dirichlet eigenvalue problem of the $p$-Laplace operator, denoted by $\lambda_{1,p}$, is characterized as,
\begin{equation}\label{var1}
\lambda_{1,p} = \min_{0 \neq u\in C^\infty_0(\Omega)} \left \{ \frac{ \int_\Omega |\nabla u|^p dx}{\int_\Omega |u|^p dx} \right \}.
\end{equation}
The infimum is attained for a function $u_{1,p} \in W^{1,p}_0(\Omega)$. In addition, $\lambda_{1,p}$ is simple and isolated. Moreover, the eigenfunction $u_{1}$  associated to $\lambda_{1,p}$ does not change sign, and it is the only such eigenfunction (a proof can be found in \cite{Lind1}).

Via for instance Lyusternick-Schnirelmann maximum principle, it is possible to construct $\lambda_{k,p}$ for $k\geq 2$ and hence obtain an increasing sequence of eigenvalues of \eqref{pLaplacian} tending to $\infty$. There exist other variational characterizations of these eigenvalues. However, no matter which variational characterization one chooses, it always remains to show that all the eigenvalues obtained that way exhaust the whole spectrum of $\Delta_p$.

\subsection{The principal frequency and the inradius}

Given an Euclidean domain $\Omega$, the inradius is defined as the radius of the largest ball inscribed in $\Omega$, denoted by $B_{\rho_{\Omega}}$ where $\rho_\Omega:=\sup\{r:\exists B_r \subset \Omega\}$. Obtaining an upper bound for the first eigenvalue of the $p$-Laplacian involving the inradius is immediate. Indeed, noticing that  $B_{\rho_{\Omega}} \subset \Omega$ and using the domain monotonicity property, we have that
$$
\lambda_{1,p}(\Omega)\leq \lambda_{1,p}(B_r) = \lambda_{1,p}(B_1)\rho_\Omega^{-p}.
$$

On the other hand, lower bounds involving the principal frequency are a greater challenge. A classical lower bound for the case the Laplacian is the Faber-Krahn inequality. It that can be adapted to the $p$-Laplacian, $\lambda_{1,p}(\Omega) \geq \lambda_{1,p}(\Omega^*),$ where $\Omega^*$ stands for the $n$-dimensional ball of same volume than $\Omega$ (see \cite[p. 224]{Lind2}).

It is also possible to obtain a lower bound for the first eigenvalue of the Laplace operator involving the inradius of the domain. That is
\begin{equation}\label{goal_lap} \lambda_{1,2}(\Omega) \geq \alpha_{N,2} \ \rho_{\Omega}^{-2},\end{equation}
where $\alpha_{N,2} >0$ is a positive constant. The latter is equivalent, in the theory of vibrating membranes, to knowing whether for a drum to produce an arbitrarily low note, it is necessary that one can inscribe an arbitrarily large circular drum.

In the case of simply connected planar domains, it is known that \eqref{goal_lap} holds with the constant $\alpha_{2,2}=\frac{\pi^2}{4}$ (see \cite{He, Mak, Hay, Oss}). More recently, the better constant $ \alpha_{2,2} \approx 0.6197$ was found using probabilistic methods in \cite{BaC}. A similar lower bound for domains of connectivity $k \geq 2$ also holds (see \cite{Cr}).

In higher dimensions, it was first noted by W.K. Hayman in \cite{Hay} that if $A$ is a ball with many narrow inward pointing spikes removed from it, then $\lambda_{1,2}(A) = \lambda_{1,2}(\mbox{Ball}),$ but in that case the inradius of $A$  tends to $0$. Therefore,  bounds of the type,
$$\lambda_{1,2}(\Omega) \geq \alpha_{N,2} \ \rho_{\Omega}^{-2},$$
are generally not possible to obtain even if $\Omega$ is assumed to be simply connected.

The aim of this paper is to study the following generalization to the case of the $p$-Laplace operator of the following inequality,
\begin{equation}\label{goal} \lambda_{1,p}(\Omega) \geq \alpha_{N,p} \ \rho_{\Omega}^{-p},\end{equation}
where $\alpha_{N,p} >0$ is a positive constant. The main results, stated in the Section \ref{MainResults}, were announced in \cite{Poli}. Other related lower bounds for the first eigenvalue were also discussed in \cite{Poli}.

\subsection{Main results}\label{MainResults}
A striking difference between the usual Laplace operator and the $p$-Laplacian is that it is possible to obtain bounds of the type \eqref{goal} in higher dimensions, as long as $p$ is "large enough" compared to the dimension. Indeed, Hayman's observation remains valid in the case $p \leq N-1$ since for such $p$, every curve has a trivial $p$-capacity. Recall that $p$-capacity can be defined for a compact set $K\subset B_r$, where $N > 2$, as
$$\operatorname{Cap}_p(K,B_r):= \inf\left\{ \int_{B_r} |\nabla \phi|^p dx, \ \ \phi \in C^\infty_0(B_r), \ \phi\geq 1 \ \mbox{ on } K \right\}.$$
On the other hand, for $p>N$, even a single point has a positive $p$-capacity (see \cite[Chapter 13, Proposition 5 and its corollary]{Maz}). Consequently, Hayman's counterexample no longer holds since removing a single point has an impact on the eigenvalues as it can be seen from \eqref{var1}. This leads to the following theorem:

\begin{theorem}\label{ThmPrincipal}
Let $p > N$ and let $\Omega$ be a bounded Euclidean domain. Then, there exists a positive constant $C_{N,p}$ such that
\begin{equation}\label{EqPrincipal}
\lambda_{1,p}(\Omega) \geq C_{N,p}\rho_{\Omega}^{-p}.
\end{equation}
\end{theorem}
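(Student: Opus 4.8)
The plan is to establish the Poincaré-type inequality
$$\int_\Omega |u|^p\,dx \le C_{N,p}^{-1}\,\rho_\Omega^p \int_\Omega |\nabla u|^p\,dx \qquad \text{for all } u \in C^\infty_0(\Omega),$$
since by the variational characterization \eqref{var1} this is exactly equivalent to \eqref{EqPrincipal}. Throughout I extend each $u \in C^\infty_0(\Omega)$ by zero to all of $\mathbb{R}^N$; the extension is continuous, lies in $W^{1,p}$ of every ball, and satisfies $\int_{\mathbb{R}^N}|\nabla u|^p = \int_\Omega |\nabla u|^p$.

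The first ingredient is Morrey's inequality, which is available precisely because $p > N$: for any ball $B_R$ and any $u \in W^{1,p}(B_R)$ one has, with a constant $c_N$ that is scale invariant (as one checks by the dilation $z \mapsto Rz$),
$$|u(x) - u(y)| \le c_N\,|x-y|^{1 - N/p}\left(\int_{B_R}|\nabla u|^p\,dz\right)^{1/p}, \qquad x,y \in B_R.$$
The second ingredient is the elementary geometric observation that any ball of radius strictly larger than $\rho_\Omega$ fails to be contained in $\Omega$, and hence must meet $\Omega^c$, where the zero extension of $u$ vanishes.

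Next I would localize. Tile $\mathbb{R}^N$ by a grid of half-open cubes $\{Q_i\}$ of side $\rho_\Omega$, and for each $Q_i$ meeting $\Omega$ let $\tilde B_i$ be the concentric ball of radius $r_N\,\rho_\Omega$, where $r_N$ depends only on $N$ and is chosen large enough that $Q_i \subset \tilde B_i$ and $r_N > 1$. Since $r_N\rho_\Omega > \rho_\Omega$, the ball $\tilde B_i$ contains a point $y_i \in \Omega^c$, so $u(y_i)=0$. Applying Morrey on $\tilde B_i$ to $x \in Q_i$ and to $y_i$, and bounding $|x - y_i| \le \operatorname{diam}\tilde B_i = 2 r_N\rho_\Omega$, yields the pointwise estimate $|u(x)|^p \le c_N^p (2r_N\rho_\Omega)^{p-N}\int_{\tilde B_i}|\nabla u|^p\,dz$ for every $x \in Q_i$. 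Integrating over $Q_i$ produces a volume factor $|Q_i| = \rho_\Omega^N$, which combines with $\rho_\Omega^{p-N}$ to give exactly $\rho_\Omega^p$:
$$\int_{Q_i}|u|^p\,dx \le c_N^p\,(2r_N)^{p-N}\,\rho_\Omega^p \int_{\tilde B_i}|\nabla u|^p\,dz.$$

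Summing over $i$ and using that the enlarged balls $\{\tilde B_i\}$ have bounded overlap—each point of $\mathbb{R}^N$ lying in at most $M_N$ of them, with $M_N$ depending only on $N$—gives $\int_\Omega |u|^p \le c_N^p (2r_N)^{p-N} M_N\,\rho_\Omega^p \int_\Omega |\nabla u|^p$, which is the desired inequality with $C_{N,p} = \left(c_N^p (2r_N)^{p-N}M_N\right)^{-1}$. The main obstacle, and the reason the hypothesis $p>N$ is indispensable, is that a direct application of Morrey only produces an $L^\infty$ bound on $u$; integrating it naively over $\Omega$ would introduce the full volume $|\Omega|$, which is uncontrolled for long thin domains. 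Localizing onto a grid at the scale $\rho_\Omega$ and summing with bounded overlap is precisely what replaces $|\Omega|$ by the correct power of the inradius.
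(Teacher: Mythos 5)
Your proof is correct, and its analytic core is the same as the paper's: both arguments hinge on Morrey's inequality (this is exactly where $p>N$ enters) applied at the scale of the inradius, followed by a bounded-overlap covering to sum the local estimates into the Poincaré inequality $\int_\Omega|u|^p \le C\,\rho_\Omega^p\int_\Omega|\nabla u|^p$, which is equivalent to \eqref{EqPrincipal} by \eqref{var1}. Where you genuinely diverge is in the two supporting ingredients. First, the covering: the paper invokes Hayman's covering lemma (Lemma \ref{LemmeHaymann}), which covers $\Omega$ by balls of radius $\rho_\Omega(1+\sqrt N)$ centered at points of $\partial\Omega$ and splits them into at most $(2\sqrt N+4)^N$ disjoint families; you instead tile $\mathbb{R}^N$ by cubes of side $\rho_\Omega$ and pass to concentric enlarged balls with bounded overlap, a self-contained construction requiring no citation. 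Second, the vanishing point that anchors Morrey: the paper centers its balls at boundary points $a\in\partial\Omega$ and, because it works with the eigenfunction $u_\lambda\in W_0^{1,p}(\Omega)$, it must invoke regularity theory (for $p>N$ every boundary point is regular) to assert $u_\lambda(a)=0$; you work with test functions $u\in C_0^\infty(\Omega)$ extended by zero and locate a zero of $u$ in each enlarged ball purely from the definition of the inradius (a ball of radius $r_N\rho_\Omega>\rho_\Omega$ cannot lie inside $\Omega$). This makes your argument more elementary: it needs no regularity of boundary points for the $p$-Laplacian, and it sidesteps a slight mismatch in the paper, whose key lemma is stated for eigenfunctions while the final summation is asserted for all $u\in C_0^\infty(\Omega)$. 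The only blemish is notational: your Morrey constant $c_N$ in fact depends on $p$ as well as $N$ (as does the resulting $C_{N,p}$), but this is harmless since the theorem allows the constant to depend on both.
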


It is known that if $p > N-1$, every curve has a positive $p$-capacity. Therefore, Hayman's counterexample does not work in that case as well. Nevertheless, points do not have a positive $p$-capacity if $N-1 < p \leq N$. Taking into account the latter observations, we get the following result:



\begin{theorem} \label{CurPosCap}
Let $\Omega$ be a bounded Euclidean domain such that $\partial \Omega$ is connected. If $ p > N-1$, then there exists a positive constant $C_{N,p}$ 
such that
\begin{equation} \lambda_{1,p}(\Omega) \geq C_{N,p}\rho_\Omega^{-p}.\end{equation}
\end{theorem}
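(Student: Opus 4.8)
The plan is to prove Theorem \ref{CurPosCap} by exactly the same two-step scheme that yields Theorem \ref{ThmPrincipal}, replacing the capacitary input. The common engine is a Maz'ya-type lower bound for the principal frequency in terms of the \emph{interior capacity radius}: there is a constant $C_{N,p}>0$ and a threshold $\gamma_0=\gamma_0(N,p)\in(0,1)$ such that
\begin{equation}\label{mazya-plan}
\lambda_{1,p}(\Omega)\geq C_{N,p}\, r_c(\Omega)^{-p},
\end{equation}
where $r_c(\Omega)$ is the supremum of those $r>0$ for which some ball $B_r(x)$ satisfies
$$
\gamma_p(x,r):=\frac{\operatorname{Cap}_p\!\big(\overline{B_r(x)}\setminus\Omega,\, B_{2r}(x)\big)}{\operatorname{Cap}_p\!\big(\overline{B_r(x)},\, B_{2r}(x)\big)}\leq \gamma_0.
$$
The inequality \eqref{mazya-plan} carries no topological hypothesis. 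Consequently the entire content of the theorem reduces to the geometric estimate
\begin{equation}\label{reduction-plan}
r_c(\Omega)\leq C\,\rho_\Omega,
\end{equation}
after which \eqref{mazya-plan} gives $\lambda_{1,p}(\Omega)\geq C_{N,p}C^{-p}\rho_\Omega^{-p}$, as desired.

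To prove \eqref{reduction-plan} I would fix an arbitrary ball $B_r(x)$ with $r\geq 2\rho_\Omega$ and show that its relative capacity is bounded below, i.e.\ $\gamma_p(x,r)\geq \gamma_0$, so that no such ball is admissible and hence $r_c(\Omega)\leq 2\rho_\Omega$. There are two cases. If $B_{r/2}(x)\subset \mathbb{R}^N\setminus\Omega$, then the complement fills a whole interior ball and $\gamma_p(x,r)$ is trivially bounded below by a dimensional constant. Otherwise $B_{r/2}(x)$ meets $\Omega$, and since $r/2\geq\rho_\Omega$ the ball cannot be contained in $\Omega$, so it meets $\partial\Omega$ as well; thus $\partial\Omega\cap B_{r/2}(x)\neq\emptyset$. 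This is exactly where the hypothesis that $\partial\Omega$ is connected enters: starting from a point $z_0\in\partial\Omega\cap B_{r/2}(x)$, connectedness forces the continuum $\partial\Omega$ to reach the sphere $\partial B_r(x)$ (or, in the degenerate case where $\partial\Omega\subset B_r(x)$, to have diameter comparable to $r$ after passing to a slightly smaller concentric ball). In either situation one extracts a connected set $F\subset \big(\mathbb{R}^N\setminus\Omega\big)\cap \overline{B_r(x)}$ with $\operatorname{diam}(F)\geq c\,r$.

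The main obstacle, and the place where the threshold $p>N-1$ is decisive, is the capacitary lower bound for such continua: for $p>N-1$ there is $c_0=c_0(N,p)>0$ so that every continuum $F\subset\overline{B_r(x)}$ with $\operatorname{diam}(F)\geq c\,r$ satisfies $\operatorname{Cap}_p\!\big(F,B_{2r}(x)\big)\geq c_0\,\operatorname{Cap}_p\!\big(\overline{B_r(x)},B_{2r}(x)\big)$. This is precisely the statement that one-dimensional (connected) obstructions carry nontrivial $p$-capacity exactly when $p>N-1$, and it is the analogue, one dimension lower, of the fact that single points have positive capacity for $p>N$ used in Theorem \ref{ThmPrincipal}. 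I would prove it by testing with a capacitary potential adapted to a segment realizing the diameter of $F$, or equivalently by monotonicity of capacity together with the known positive capacity of a line segment of length $\sim r$ in $B_{2r}(x)$. Since $F\subset \overline{B_r(x)}\setminus\Omega$, monotonicity gives $\gamma_p(x,r)\geq c_0$.

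Finally I would close the argument by choosing the threshold in the definition of $r_c$ to be $\gamma_0=\min\{c_0,\,c_0'\}$, where $c_0'$ is the dimensional constant from the first case. With this choice every ball of radius $r\geq 2\rho_\Omega$ fails the admissibility condition $\gamma_p(x,r)\leq\gamma_0$, whence $r_c(\Omega)\leq 2\rho_\Omega$, which is \eqref{reduction-plan}. Combined with \eqref{mazya-plan} this proves the theorem. I expect the only genuinely delicate points to be the topological extraction of the crossing continuum (including the degenerate case where $\partial\Omega$ stays inside $B_r(x)$) and the verification that the continuum capacity estimate degenerates exactly at $p=N-1$, matching the sharpness suggested by Hayman's counterexample.
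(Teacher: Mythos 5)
Your proposal follows essentially the same strategy as the paper, repackaged. The paper's proof has three ingredients: (i) Maz'ya's capacitary Poincar\'e inequality \cite[Theorem 14.1.2]{Maz} on balls centered at boundary points, (ii) Hayman's covering lemma (Lemma \ref{LemmeHaymann}) to sum the local inequalities over $\Omega$, and (iii) the Bucur--Trebeschi lemma \cite[Lemma 5.2]{BuTr} giving the lower bound $\operatorname{Cap}_p(K\cap \overline{B}_r(x),B_{2r}(x))\geq C_{N,p}\operatorname{Cap}_p(\overline{B}_r(x),B_{2r}(x))$ for a compact connected $K$ of large diameter, the connectedness of $\partial\Omega$ entering to show that the component of $\Omega^\complement\cap\overline{B}_\delta(x)$ through a boundary point $x$ has diameter at least $\delta$ (else $\Omega^\complement$ would be bounded). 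Your ``engine'' \eqref{mazya-plan} is exactly (i)$+$(ii) packaged as a single known statement about the interior capacity radius, and your reduction $r_c(\Omega)\leq 2\rho_\Omega$ plays the role of (iii); the only structural difference is that you must treat balls centered at arbitrary points (hence your case analysis), whereas the paper works only with boundary-centered balls. This modular route is legitimate and a bit cleaner conceptually.

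There is, however, a genuine gap at your decisive step: the continuum capacity estimate cannot be proved ``by monotonicity of capacity together with the known positive capacity of a line segment.'' A continuum $F$ with $\operatorname{diam}(F)\geq c\,r$ need not contain any line segment (think of an arc), so no inclusion argument compares $\operatorname{Cap}_p(F,B_{2r}(x))$ with the capacity of a segment. What you need is the statement that, among continua of prescribed diameter, the segment essentially minimizes $p$-capacity; this is true but nontrivial, it is precisely \cite[Lemma 5.2]{BuTr} (proved by symmetrization-type arguments, and degenerating exactly at $p=N-1$, as you anticipate), and it is the key citation in the paper's proof. With that lemma quoted rather than ``proved'' by monotonicity, your argument closes. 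Two smaller repairs: in the degenerate case $\partial\Omega\subset B_r(x)$ your claim that $\partial\Omega$ has diameter comparable to $r$ is false (take $\Omega$ a tiny ball deep inside $B_r(x)$); instead, observe that $\partial\Omega\subset B_r(x)$ together with boundedness and connectedness of $\Omega$ forces $\Omega\subset B_r(x)$, so the full sphere $\partial B_r(x)$ lies in $\overline{B}_r(x)\setminus\Omega$ and is itself a continuum of diameter $2r$ (alternatively, $\operatorname{Cap}_p(\partial B_r(x),B_{2r}(x))=\operatorname{Cap}_p(\overline{B}_r(x),B_{2r}(x))$, so $\gamma_p(x,r)$ is bounded below trivially). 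Finally, take $r>2\rho_\Omega$ strictly, so that $B_{r/2}(x)\not\subset\Omega$ is forced by the definition of the inradius, and conclude $r_c(\Omega)\leq 2\rho_\Omega$ by taking the supremum.
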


Theorem \ref{CurPosCap} can be viewed as a generalization of classical results for the Laplacian on simply connected planar domains ($p=N=2$) discussed earlier in Section 1.3. Our techniques allow to generalize these results to arbitrary dimensions without however the explicit constants.

Also notice that the result can not hold if $p=N-1$, as noted by Hayman for the case $p=2, N=3$.

\begin{remark}
The proof of Theorem \ref{CurPosCap} also holds provided that the connected components of $\Omega^\complement$ are "large enough". It would be interesting to extend Theorem \ref{CurPosCap} for domains that are $k$-connected.
\end{remark}


\section{Proofs} \label{S5}

\subsection{Proof of Theorem \ref{ThmPrincipal}}\label{S5thmP}

In order to prove \eqref{EqPrincipal}, we need the following lemma:

\begin{lemma}\label{LemmePrincipal}
Let $p>N$, then for all $a \in \partial\Omega$, there exists $C_{N,p}>0$ such that
$$\displaystyle \int_{B_R(a)} |u_\lambda(x)|^p dx \leq C_{N,p} R^p \int_{B_R(a)} |\nabla u_\lambda(x)|^p dx$$
where $R>0$.
\end{lemma}

\begin{proof}

Since $p>N$, we know that every boundary point is regular (see for instance \cite[p. 19 and Section 6]{Lind3} for a discussion on regular boundary points). Hence, the boundary condition is attained in the classical sense. Therefore, $\forall a \in \partial\Omega$, $u_\lambda(a) = 0$. Pick any such $a$. Let $R$ be any positive number. We want to show that there exists $C_{N,p}>0$ such that
$$\displaystyle \int_{B_R(a)} |u_\lambda(x)|^p dx \leq C_{N,p} R^p \int_{B_R(a)} |\nabla u_\lambda(x)|^p dx.$$

Notice that $B_R(a) \cap \Omega^\complement $ may not be empty. In such case, we extend $u_\lambda$ by zero outside $\Omega$. Since $p>N$, by a weaker version of Sobolev-Morrey result (see \cite[p. 283]{Evans}), for every $x\in B_R(a)$, we have that
$$|u_\lambda(x)| = |u_\lambda(x) - u_\lambda(a)| \leq C_{N,p} ||\nabla u_\lambda||_{L^p(B_r(x) \cap B_r(a))} |x-a|^{1 - N/p},$$
where $r = |x - a| \leq R$. Therefore, since $x \in B_R(a)$, we have that $B_r(x) \cap B_r(a) \subset B_R(a)$, thus, we get that
$$ |u_\lambda(x)| \leq C_{N,p} R^{1-N/p} ||\nabla u_\lambda||_{L^p(B_R(a))} .$$
Raising to the power $p$ and integrating over $B_R(a)$ yield that
\begin{eqnarray}\label{consecmor}
\displaystyle \int_{B_R(a)} |u_\lambda(x)|^p dx & \leq & C_{N,p} Vol(B_R(a)) R^{p-N} \int_{B_R(a)} |\nabla u_\lambda(x)|^p dx \\
& \leq & C_{N,p} R^p \int_{B_R(a)} |\nabla u_\lambda(x)|^p dx. \nonumber
\end{eqnarray}
\end{proof}

Following Hayman's argument, the next step consists of using \cite[Lemma 5]{Hay},
\begin{lemma}\label{LemmeHaymann}
$\Omega$ can be covered by balls $B_r(x)$ for $x\in \partial \Omega, r = \rho_\Omega (1+\sqrt{N})$ such that the $B_r(x)$ can be divided into at most $C_2 \leq (2\sqrt{N}+4)^N$ subsets in such way that different balls in the same subset are disjoint.
\end{lemma}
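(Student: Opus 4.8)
The plan is to produce the covering explicitly from a cubic lattice and to obtain the coloring from a periodic residue argument, so that both the covering radius $r=\rho_\Omega(1+\sqrt N)$ and the number $C_2$ of color classes come out with the stated constants. First I would record the single geometric fact that drives everything: by definition of the inradius no ball of radius exceeding $\rho_\Omega$ is contained in $\Omega$, so every $y\in\Omega$ satisfies $\operatorname{dist}(y,\partial\Omega)\le\rho_\Omega$; in particular each $y\in\Omega$ admits a boundary point within distance $\rho_\Omega$.

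Next I would tile $\mathbb{R}^N$ by the half-open cubes $Q_{\mathbf k}=\prod_{\alpha=1}^{N}[k_\alpha\rho_\Omega,(k_\alpha+1)\rho_\Omega)$, $\mathbf k\in\mathbb{Z}^N$, each of side $\rho_\Omega$ and diameter $\rho_\Omega\sqrt N$. For every cube meeting $\partial\Omega$ I select one boundary point $x_{\mathbf k}\in Q_{\mathbf k}\cap\partial\Omega$ (any choice works) and take the ball $B_r(x_{\mathbf k})$. To see these cover $\Omega$, fix $y\in\Omega$, pick $a\in\partial\Omega$ with $|y-a|\le\rho_\Omega$, and let $Q_{\mathbf k}\ni a$; then $x_{\mathbf k}$ exists and $|y-x_{\mathbf k}|\le|y-a|+|a-x_{\mathbf k}|\le\rho_\Omega+\rho_\Omega\sqrt N=r$, so $y\in B_r(x_{\mathbf k})$.

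For the coloring I would quantify when two selected balls meet. If $B_r(x_{\mathbf k})$ and $B_r(x_{\mathbf l})$ intersect then $|x_{\mathbf k}-x_{\mathbf l}|<2r=2\rho_\Omega(1+\sqrt N)$; since the centers lie in $Q_{\mathbf k},Q_{\mathbf l}$, the $\ell^\infty$ estimate $|x_{\mathbf k}-x_{\mathbf l}|\ge(\|\mathbf k-\mathbf l\|_\infty-1)\rho_\Omega$ (valid whenever $\|\mathbf k-\mathbf l\|_\infty\ge1$) forces $\|\mathbf k-\mathbf l\|_\infty<2\sqrt N+3$. I then color each cube by the residue vector $\mathbf k\bmod m$ with $m=\lceil 2\sqrt N+3\rceil$, so that $2\sqrt N+3\le m\le 2\sqrt N+4$. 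Two distinct same-color cubes differ by a nonzero multiple of $m$ in some coordinate, hence $\|\mathbf k-\mathbf l\|_\infty\ge m\ge 2\sqrt N+3$, so their balls are disjoint. This yields at most $m^N\le(2\sqrt N+4)^N$ color classes, which is exactly the claimed bound on $C_2$.

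The conceptual content here is elementary, so the hard part is not any single idea but the constant bookkeeping: one must match the factor $(1+\sqrt N)$ in the radius against the lattice side $\rho_\Omega$ so that the conflict threshold $2\sqrt N+3$ rounds up to a modulus delivering precisely $(2\sqrt N+4)^N$ classes. I would therefore check most carefully the chain of inequalities relating $r$, the cube diameter, and the inter-center distance, and I would make sure the $\ell^\infty$ lower bound is invoked only in the regime $\|\mathbf k-\mathbf l\|_\infty\ge1$ where it is valid.
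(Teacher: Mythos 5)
Your proof is correct, but note that the paper itself does not prove this lemma at all: it is quoted verbatim as Lemma 5 of Hayman's paper \cite{Hay} and simply invoked. Your argument is therefore a self-contained reconstruction rather than a parallel to anything in the text; it recovers exactly the stated constants and is in the same spirit as Hayman's original lattice argument. The key steps all check out: the definition of the inradius gives $\operatorname{dist}(y,\partial\Omega)\le\rho_\Omega$ for every $y\in\Omega$ (the ball of radius $\operatorname{dist}(y,\partial\Omega)$ about $y$ lies in $\Omega$, so that distance is at most the supremum defining $\rho_\Omega$); the half-open cubes of side $\rho_\Omega$ have diameter $\rho_\Omega\sqrt N$, which gives the covering radius $\rho_\Omega+\rho_\Omega\sqrt N=r$; the conflict bound $\|\mathbf k-\mathbf l\|_\infty<2\sqrt N+3$ for intersecting balls follows correctly from $|x_{\mathbf k}-x_{\mathbf l}|\ge(\|\mathbf k-\mathbf l\|_\infty-1)\rho_\Omega$ combined with $|x_{\mathbf k}-x_{\mathbf l}|<2r$; and coloring the cube indices by residues modulo $m=\lceil 2\sqrt N+3\rceil\le 2\sqrt N+4$ makes any two distinct same-color indices differ by at least $m\ge 2\sqrt N+3$ in some coordinate, hence their balls are disjoint, with at most $m^N\le(2\sqrt N+4)^N$ classes. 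One cosmetic remark: if the balls $B_r(x)$ are taken open, the covering step needs strict inequality $|y-x_{\mathbf k}|<r$, but this is automatic in your setup, since two points of the same half-open cube are at distance strictly less than $\rho_\Omega\sqrt N$.
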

Covering $\Omega$ by such balls combined to \eqref{LemmePrincipal}, one gets that
\begin{eqnarray*}
\int_{\Omega} u^p dV & \leq & C(N,p) r^p \int_\Omega |\nabla u|^p dV \\
& \leq & C_1(N,p)  \rho_\Omega^p \int_\Omega |\nabla u|^p dV,
\end{eqnarray*}
for all $u \in C_0^\infty(\Omega)$. This concludes the proof of Theorem \ref{ThmPrincipal}.


\subsection{Proof of Theorem \ref{CurPosCap}}\label{S5thmS}

Let $\delta = 4 \rho_{\Omega} (1 + \sqrt{N})$. For any $x \in \partial
\Omega$, let $B_\delta(x)$ denote a ball of radius $\delta$ centered at $x$.
Let $K$ denote the connected component of $\closure{\Omega^\complement} \cap
\closure{B}_\delta(x)$ containing $x$. Notice that
$\operatorname{diam}(K)\geq \delta$. Indeed, by contradiction, suppose that
$\operatorname{diam}(K) < \delta$. Then, $K \cap \partial B_\delta(x) =
\emptyset$, which is equivalent to saying that $\closure{\Omega^\complement}
\cap \partial \closure{B}_\delta(x)= \emptyset$. The latter implies that
$\closure{\Omega^\complement}$ is bounded, a contradiction since $\partial
\Omega$ is connected and $\Omega$ is bounded.

Let $r = \rho_{\Omega} (1 + \sqrt{N}) = \frac{\delta}{4}$. We need to use a lemma to obtain a lower bound on the capacity of $K$, namely \cite[Lemma 5.2]{BuTr}, which states that:
\begin{lemma}
Let $p>N-1$. Let $K \subset \mathbb{R}^N$ be a compact, connected set. Then, for all $x\in K$ and $r < \frac{1}{2} \operatorname{diam}(K)$, we have
\begin{equation*}
\operatorname{Cap}_p(K \cap \closure{B}_r(x) , B_{2r}(x)) \geq
\frac{\operatorname{Cap}_p([0,1] \times \{0\}^{N-1},
B_{2}(x))}{\operatorname{Cap}_p(\closure{B}_1(0),  B_2(0))}
\operatorname{Cap}_p(\closure{B}_r (x),  B_{2r}(x)).
\end{equation*}
\end{lemma}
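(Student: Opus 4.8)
The plan is to reduce the lemma, by the scaling behaviour of $p$-capacity, to the geometric statement that a continuum joining the centre of $B_r(x)$ to the sphere $\partial B_r(x)$ has $p$-capacity no smaller than a radial segment of length $r$. By translation invariance I may take $x=0$. The two scaling identities $\operatorname{Cap}_p(\closure{B}_r(0), B_{2r}(0)) = r^{N-p}\operatorname{Cap}_p(\closure{B}_1(0), B_2(0))$ and $\operatorname{Cap}_p([0,r]\times\{0\}^{N-1}, B_{2r}(0)) = r^{N-p}\operatorname{Cap}_p([0,1]\times\{0\}^{N-1}, B_2(0))$, both following from the substitution $\phi(y)\mapsto\phi(y/r)$, show that the right-hand side of the asserted inequality is precisely $\operatorname{Cap}_p([0,r]\times\{0\}^{N-1}, B_{2r}(0))$, the capacity of a length-$r$ radial segment. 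Thus it suffices to prove
$$\operatorname{Cap}_p(K \cap \closure{B}_r(0), B_{2r}(0)) \geq \operatorname{Cap}_p([0,r]\times\{0\}^{N-1}, B_{2r}(0)).$$

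Next I would extract a convenient continuum from $K$. Since $r < \tfrac12\operatorname{diam}(K)$, the triangle inequality produces a point of $K$ lying outside $\closure{B}_r(0)$, while $0\in K\cap B_r(0)$. Letting $K_0$ be the connected component of the compact set $K\cap\closure{B}_r(0)$ that contains $0$, a boundary-bumping argument from continuum theory forces $K_0\cap\partial B_r(0)\neq\emptyset$. The radial coordinate $y\mapsto|y|$ is continuous on the connected set $K_0$ and attains both $0$ and $r$, hence attains every intermediate value; so $K_0$ meets each sphere $\partial B_t(0)$ for $t\in[0,r]$. Monotonicity of capacity in the condensed set then gives $\operatorname{Cap}_p(K\cap\closure{B}_r(0), B_{2r}(0))\geq \operatorname{Cap}_p(K_0, B_{2r}(0))$, and it remains to bound the latter below by the segment capacity.

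The heart of the argument is a symmetrization step. I fix the ray $\mathbb{R}_{\ge 0}e_1$ and let $\ast$ denote spherical (cap) symmetrization about it. The outer ball $B_{2r}(0)$ is invariant under $\ast$, and symmetrizing the capacitary potential of the condenser $(K_0, B_{2r}(0))$ yields an admissible potential for $(K_0^\ast, B_{2r}(0))$ whose $p$-Dirichlet energy does not increase, by the P\'olya--Szeg\H{o} inequality for spherical symmetrization (valid for every $1\le p<\infty$); hence $\operatorname{Cap}_p(K_0, B_{2r}(0))\geq\operatorname{Cap}_p(K_0^\ast, B_{2r}(0))$. Because $K_0$ meets each sphere $\partial B_t(0)$ for $t\in[0,r]$, its cap-symmetrization $K_0^\ast$ contains the pole $t e_1$ of every such sphere, so $K_0^\ast$ contains the entire radial segment $[0,r]\times\{0\}^{N-1}$; one further application of monotonicity gives $\operatorname{Cap}_p(K_0^\ast, B_{2r}(0))\geq\operatorname{Cap}_p([0,r]\times\{0\}^{N-1}, B_{2r}(0))$. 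Chaining these inequalities establishes the displayed bound, hence the lemma.

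The step I expect to be the main obstacle is the symmetrization inequality: one must check that spherical symmetrization applies to the (suitably truncated) capacitary potential, that it decreases the $p$-energy, and that it preserves the outer Dirichlet condition on $\partial B_{2r}(0)$. I emphasize that the hypothesis $p>N-1$ plays \emph{no} role in any of these inequalities; it enters only to guarantee that the comparison quantity $\operatorname{Cap}_p([0,1]\times\{0\}^{N-1}, B_2(0))$ is strictly positive, so that the resulting lower bound is nontrivial.
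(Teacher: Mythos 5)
The first thing to say is that the paper does not prove this lemma at all: it is imported verbatim as \cite[Lemma 5.2]{BuTr}, and the paper's ``proof'' is the citation. So there is no in-paper argument to compare yours against; what you have written is a genuine reconstruction, and it follows the classical route for bounds of this type. Your scaling reduction is correct: both $\operatorname{Cap}_p(\closure{B}_r(0),B_{2r}(0))$ and $\operatorname{Cap}_p([0,r]\times\{0\}^{N-1},B_{2r}(0))$ scale like $r^{N-p}$, so the right-hand side of the lemma is exactly $\operatorname{Cap}_p([0,r]\times\{0\}^{N-1},B_{2r}(0))$, and the lemma is precisely the assertion that a radial segment minimizes condenser capacity among compacta joining the center of $B_r$ to its boundary. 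The continuum-theoretic step is also sound: boundary bumping gives a component $K_0$ of $K\cap\closure{B}_r(0)$ joining $0$ to $\partial B_r(0)$, hence meeting $\partial B_t(0)$ for every $t\in[0,r]$; this is the only place where compactness, connectedness, and $r<\tfrac12\operatorname{diam}(K)$ are used. Your closing observation is correct as well, and worth making explicit: $p>N-1$ plays no role in the inequality itself, only in its nontriviality, since for $p\le N-1$ the segment has zero $p$-capacity and the bound is vacuous.

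Two caveats on the symmetrization step, which you rightly identify as the crux. First, the inequality $\operatorname{Cap}_p(K_0,B_{2r}(0))\ge\operatorname{Cap}_p(K_0^\ast,B_{2r}(0))$ for spherical (cap) symmetrization is a substantial theorem, not a routine verification: for general $p$ it is due to Sarvas (it can also be obtained by approximating the symmetrization by polarizations, as in Brock--Solynin), and your proof is complete only modulo such a citation --- the paper's own device of citing a black box has, in effect, been pushed one level down. Second, there are genuine measure-theoretic subtleties in the phrase ``$K_0^\ast$ contains the pole of every sphere'': one must adopt the convention that a nonempty slice $K_0\cap\partial B_t(0)$ of zero $\mathcal{H}^{N-1}$-measure symmetrizes to the degenerate cap $\{te_1\}$, and with that convention $K_0^\ast$ need not be compact (a sequence of long, tightly wound slices can collapse to a point in the limit sphere). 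So one must either work with a capacity defined for arbitrary sets, for which monotonicity over the compact segment $[0,r]\times\{0\}^{N-1}\subset K_0^\ast$ is still available, or state the symmetrization inequality for condensers in a form that tolerates non-closed plates. Neither issue is fatal --- the argument is a correct skeleton and almost certainly mirrors the original proof in \cite{BuTr} --- but as written, its load-bearing beam is borrowed rather than built.
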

Since $\operatorname{diam}(K) \geq \delta$, we have that $\frac{1}{2}
\operatorname{diam}(K) \geq \frac{1}{2} \delta > r$, which allows us to use
the latter lemma, yielding that
\begin{equation}\label{eqCap}
\operatorname{Cap}_p(K \cap \closure{B}_r(x) , B_{2r}(x)) \geq \underbrace{
\frac{\operatorname{Cap}_p([0,1] \times \{0\}^{N-1},
B_{2}(x))}{\operatorname{Cap}_p(\closure{B}_1(0),  B_2(0))} }_{C_{N,p}}
\operatorname{Cap}_p(\closure{B}_r (x),  B_{2r}(x)).
\end{equation}
Let $F = K \cap \closure{B}_r(x) = \closure{\Omega^\complement} \cap
\closure{B}_r(x)$. Note that $u \equiv 0$ on $F \subset \closure{B}_r(x)$.
Therefore, using \cite[Theorem 14.1.2]{Maz}, we get that
\begin{equation}\label{PoinEstim}
\int_{B_r(x)} |u|^p dV \leq \frac{K_{N,p} r^N}{\operatorname{Cap}_p(F,
B_{2r}(x))} \int_{B_r(x)} |\nabla u|^p dV,
\end{equation}
where $K_{N,p}$ is a non-explicit constant. Using \eqref{eqCap}, we get that
$$ \frac{1}{\operatorname{Cap}_p(F, B_{2r}(x))} \leq
\frac{1}{C_{N,p}\operatorname{Cap}_p(B_{r}(x), B_{2r}(x))} =
\frac{1}{C_{N,p}r^{N-p}\operatorname{Cap}_p(\closure{B}_1 (0),  B_2(0))}. $$
Combining the latter equation with \eqref{PoinEstim}, we get that
\begin{equation}\label{PoinEstim2}
\int_{B_r(x)} |u|^p dV \leq \frac{K_{N,p}
r^p}{C_{N,p}\operatorname{Cap}_p(\closure{B}_{1}(0),  B_{2}(0))} \int_{B_r(x)}
|\nabla u|^p dV.
\end{equation}
Following Hayman's argument, the next step consists of using Lemma
\ref{LemmeHaymann}. Co\-vering $\Omega$ by such balls combined to
\eqref{PoinEstim2}, one gets that
\begin{eqnarray*}
\int_{\Omega} u^p dV & \leq & K_{1,N,p} K_{2,N,p} r^p \int_\Omega |\nabla
u|^p dV \\
& \leq & K_{3,N,p}  \rho_\Omega^p \int_\Omega |\nabla u|^p dV,
\end{eqnarray*}
for all $u \in C_0^\infty(\Omega)$, yielding the desired result.

\proof[Acknowledgements]
This research is part of the author's Ph.D. thesis under the supervision of Iosif Polterovich. The author is grateful to Dorin Bucur for his insights on the $p$-capacity and for suggesting that Theo\-rems \ref{ThmPrincipal} and \ref{CurPosCap} should hold. Moreover, he wishes to thank Dan Mangoubi for useful discussions.

\end{document}